\def\mapright#1#2#3{\smash{\mathop{\hbox to
#3{\rightarrowfill}}\limits^{#1}_{#2}}}
\def\mapleft#1#2#3{\smash{\mathop{\hbox to
#3{\leftarrowfill}}\limits^{#1}_{#2}}}
\def\mapright#1#2{\smash{\mathop{\hbox to 0.90cm{\rightarrowfill}}\limits^{#1}_{#2}}}
\def\mapleft#1#2{\smash{\mathop{\hbox to 0.90cm{\leftarrowfill}}\limits^{#1}_{#2}}}
\def\mapleftright#1#2{\smash{\mathop{\hbox to 0.80cm{\leftarrowfill \rightarrowfill}}\limits^{#1}_{#2}}}
\title{A graphical calculus for tangles in surfaces
\footnote{2010 Mathematics Subject Classification:
57M25 (primary), 57M27 (secondary).}}
\author{Peter M.~Johnson \and S\'ostenes Lins}
\date{}
\begin{document}

\maketitle

\begin{abstract}
We show how the theory of Reidemeister moves of tangles, in a surface with boundary,
can be simplified by concentrating on we call fine tangles, where the surface has been
cut into very simple pieces and the moves are of a restricted kind.
This reworking of the graphical foundations for link and tangle theory can be expected to have
a variety of applications, including ones involving 3-manifolds.  It opens the way to new
approaches for defining `facial' state-sum invariants, based partly on assigning symbols
to faces of tangles.
\end{abstract}

\section{Introduction}

Our aim is to make a substantial improvement in a part of the foundations of combinatorial
topology often used when defining invariants of knots and links, or of related
objects such as diagrams (isotopy classes) of tangles in surfaces.
The usual requirements for defining invariants of tangles in surfaces are weakened
by restricting the allowed Reidemeister moves of type 2 and,
to compensate, considering only fine tangles, as defined below.
To evaluate a new invariant on an arbitrary tangle, the tangle should first
be prepared, via unrestricted moves of type 2, to make it fine.
In a companion article \cite{joli2012B}, this idea is combined with ones of
Reshetikhin and Turaev to define new families of `facial' state-sum invariants.
For these, a much coarser process of refinement suffices, producing what we call
well-placed tangles, as there is a way to
deal directly with faces that have handles or disconnected boundaries,
so that they do not need to be cut into finer pieces.
The present article deals only with the graphical foundations needed to justify the
general approach, which applies even to unoriented surfaces.
As a corollary to our main result relating tangles with fine tangles, we give
the weaker but useful result for well-placed tangles.
It remains to be seen what other approaches can exploit
fine tangle theory to define new types of invariants.

First some historical background will be sketched.  The Reidemeister moves
for links (including knots), presented as generic plane projections
of links undergoing isotopy in space, were discovered independently by Reidemeister
\cite{reid1926} and Alexander and Briggs \cite{alexbriggs1927}.  They became
widely known through Reidemeister's second book on knot theory \cite{reid1932}.
The focus is not on a single link or a diagram $L$ but on the class
$[L]$ of diagrams equivalent to the given one via appropriate local
modifications.  Among them we include moves of type 0 (isotopies of the plane,
affecting the tangle but not its diagram), type 1 (often restricted or avoided),
type 2 (to be restricted below, and subdivided into cases), and type 3. 
With minor variations, most notably versions where link components are oriented,
Reidemeister moves usually provide the foundation used to obtain relations
whose satisfaction is necessary and sufficient for defining invariants
of links and related objects such as framed tangles in surfaces.

An avenue that opens further possibilities for obtaining invariants
is to calculate only after obtaining some kind of prepared diagram.
Our main idea is to use a new sort of preparation, described in the next section,
but here we mention some other approaches and give details that will be useful.

One of the most fruitful ideas is to present links as braids, in the
plane or even within 3-manifolds, as in Lambropolou and Rourke \cite{laro1997}. 
The extra structure can then be exploited.  Among other approaches, an especially
deep one, due mainly to Thurston (see for example \cite{thurston1997three}), involves
triangulating link complements after finding a hyperbolic structure, and applies to
all links in space except those of a restricted kind.
Another set of ideas concern framing.
There were several independent discoveries concerning twist and writhe, starting with
C\u{a}lug\u{a}reanu \cite{ca1959} in 1959, but these long went unperceived by knot theorists,
whose concern was combinatorial topology rather than differential geometry.
As late as 1983, Trace \cite{tr1983} showed how suitable preparation of planar diagrams
of knots or links makes the first Reidemeister move, on adding and removing curls,
completely unnecessary.
The only Reidemeister moves that matter for defining invariants of oriented links in the plane
are those of type 2 and 3, as each link component can be prepared by adding curls to 
adjust the writhe and rotation number, say to 0 and 1.  In practice, it proved better to
ignore the rotation number, to facilitate the study of more general objects such as links
in oriented surfaces.
The focus shifted to framed objects, with diagrams that admit a writhe-preserving form of the
first Reidemeister move, also known as a ribbon move.
Unframed links, invariant even under moves of type 1,
can be regarded as 0-framed links: one need only prepare all diagrams
by adding curls to make all link components have writhe 0.

We are indebted to
the Departamento de Matem\'atica, UFPE, Brazil and
to the Centro de Inform\'atica, UFPE, Brazil for financial support.
The second author is
also supported by a research grant from CNPq/Brazil, proc. 301233/2009-8.

\section{Definitions and results}

We will present a new way to prepare tangle diagrams
and will describe which moves are appropriate for defining the notion of
equivalence between prepared diagrams.
We usually refer to tangles but implicitly work only up to isotopy. 
Following and adapting usual conventions and definitions for PL objects, tangles
(including links) will be drawn in a compact surface $S$ whose boundary $\partial S$
can be regarded as consisting of $k$ disjoint `circles', also called holes, left after
removing open disks from a surface without boundary.
In $S$, a {\em tangle} consists of a finite set
of curves, where each intersection is transverse and endowed
with under-over crossing information, such that each curve is closed or has
endpoints in $\partial S$, and is otherwise disjoint from $\partial S$.
Tangles are assumed to be unoriented, merely to avoid having to 
list oriented versions of the basic moves below.
Surfaces can be treated abstractly, as in combinatorial topology, but it is also convenient to
use informal metric language, for example mentioning two close and almost parallel segments in $S$.

We prefer not to mention thickened surfaces, so faces of a tangle $T$
in $S$ can be defined to be the connected components of the complement of
$T \cup \partial S$ in $S$.
In $S$, the boundary of a face relative to $T$ is contained in $T \cup \partial S$,
which decomposes naturally into arcs, used in the usual way to
codify combinatorial information.
This describes how to glue faces at arcs
to reconstruct up to homeomorphism the tangle in the surface.
If one prefers each face to have a boundary in which no two arcs are glued together,
consider tangles that have been thickened so that their strands become thin bands in $S$.

Equivalence of tangles $T$ in $S$ will be defined via Reidemeister moves of types 0
(homeomorphisms of $S$ isotopic to the identity), 2 and 3, giving a version of the regular
isotopy classes of Kauffman \cite{kauffman1990invariant}.
Optionally, one could also allow some sort of type 1 move, usually the ribbon move (see for
example Fig.~5 of \cite{joli2012B}), which gives the important class of framed tangles.
A Reidemeister move not of type 0 modifies only part of a tangle within some open disk.

Tangles (especially links) studied here differ from virtual ones in two ways: it is not
permitted to modify $S$ by adding or removing handles at parts of $S$ disjoint from the tangle,
and tangles in the same orbit of the mapping class group of $S$ are not necessarily
equivalent.  The last condition opens the way to defining invariants that in some way
use homotopy or homology classes, as in \cite{bafe2001}.
A useful alternative is to consider the surface $S'$ obtained by removing a point from
a connected $S$, so that a tangle in $S'$ will have a distinguished `infinite' region.
It can be obtained from the plane by removing open disks, then
attaching handles and, for $S'$ unorientable, a crosscap.

Our main contribution is to remove an obstacle that until now has blocked progress with
state-sum invariants that depend in part on assigning symbols to faces.
The problem involves a non-local feature of Reidemeister moves of type 2.
When two intersecting
strands pull apart to form two disjoint strands, three regions (viewed locally) become one.
One of these, within the disk, disappears, and the other two fuse.
These two may be the same region, globally, but if not the move is said to be
{\em admissible}.  As an alternative to admissible moves, one can use two special kinds
of move that are undeniably local:
those of types 2E and 2F (E for eight, F for finger), that we now define.
Each comes in two forms, with more refined names as shown in the
the next figures.

\bigskip
\begin{figure}[!ht]
\begin{center}
\includegraphics[width=10cm]{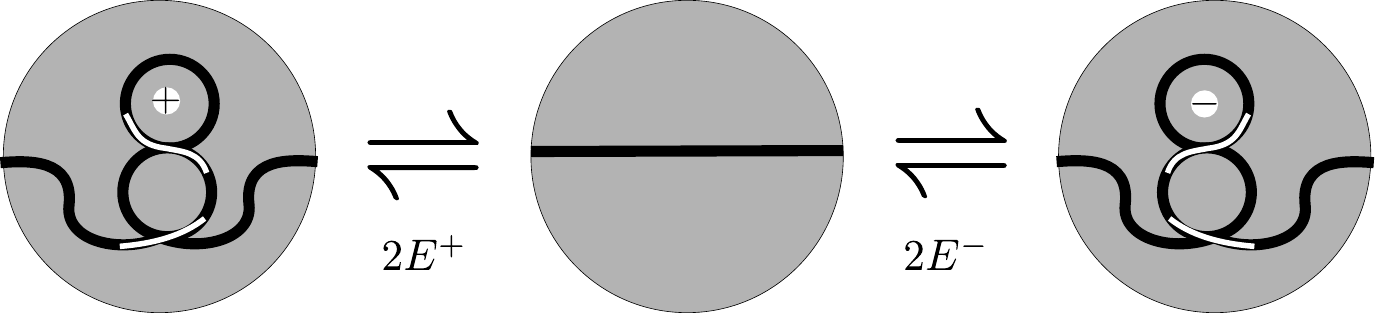} \\
\caption{\sf {\bf The 8-moves $2E^+$ and $2E^-$}
}
\label{fig:eigthmoves}
\end{center}
\end{figure}

\begin{figure}[!ht]
\begin{center}
\includegraphics[width=12cm]{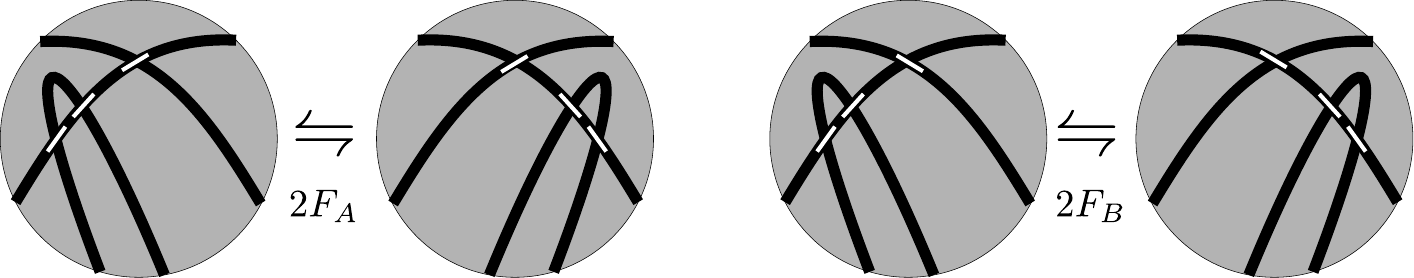} \\
\caption{\sf {\bf The finger-rotation moves $2F_A$ and $2F_B$}
}
\label{fig:fingerrotation}
\end{center}
\end{figure}

\goodbreak 
\bigskip

\begin{definition}
A tangle $T$ in $S$ is {\em fine} if each face $f$ is homeomorphic to an open disk
or an annulus (or cylinder) whose boundary $\partial f$ in $S$ intersects $\partial S$ in a
connected set, which is a boundary component of $S$ if $f$ is an annulus.
Two such tangles in $S$ are {\em strongly equivalent} if one can be obtained
from the other by a sequence of moves of types 0, 2E, 2F and 3.
\end{definition}

Local moves will turn out to be sufficient when working with tangles that
are fine.  The illustrations below
show part of a fine tangle in a surface $S$ near a face
having one edge (the maximum allowed) in $\partial S$, that edge forming
a semicircle around a hole of $S$ shown in white.
The central illustration indicates where admissible moves can be made, while
the others can be obtained from their neighbours via moves of type 2E (in
two cases) or 2F.
A barrier is shown where a move between one pair of neighbours is
impeded by the presence of an edge in $\partial S$.
The crossing type where strands cross is displayed only when relevant.

\begin{figure}[!ht]
\begin{center}
\includegraphics[width=10cm]{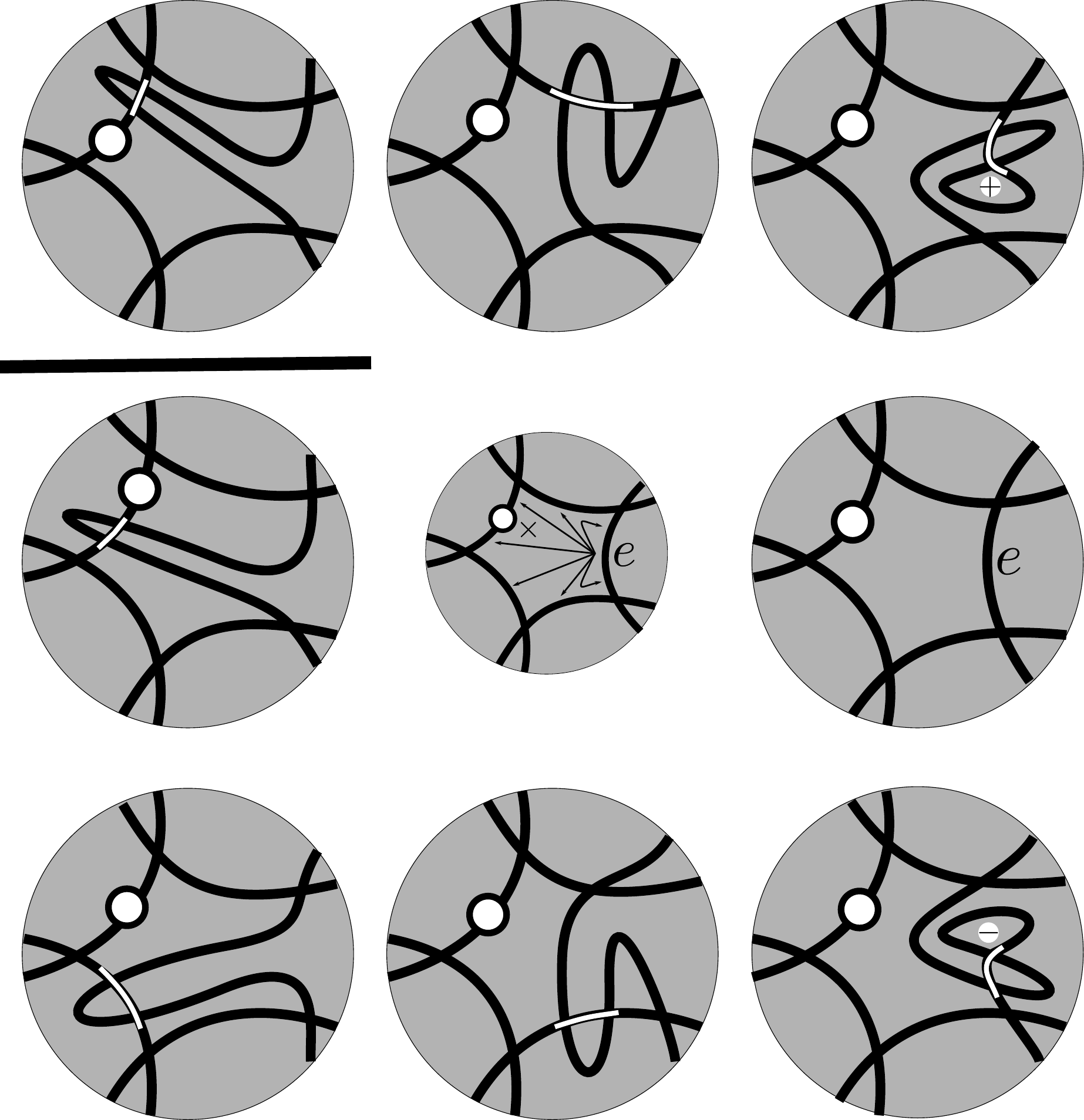} \\
\caption{\sf {\bf Factoring admissible type 2 moves into 2E and 2F moves}
}
\label{fig:factoringmoves2}
\end{center}
\end{figure}

\begin{theorem}
Any move of type 2, applied to a fine tangle 
in the direction that creates a pair of crossings,
produces another such tangle.  The move can be factored into
a move of type 2E followed by a sequence of moves of type 2F.
\end{theorem}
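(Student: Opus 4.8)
The statement has two parts: first, that a type-2 move creating a pair of crossings, applied to a fine tangle, yields a fine tangle; second, that such a move factors as one 2E move followed by a sequence of 2F moves. I would treat the two parts together, since the factorization actually explains why fineness is preserved. The heuristic is the picture in Figure~\ref{fig:factoringmoves2}: a type-2 move that pushes one strand across another is, up to isotopy, the composite of a \emph{single} finger-push that first creates a small ``eight'' (two crossings and a bigon between them) at a point of the moving strand — this is the 2E move — followed by sliding that finger along, crossing at a time, the various strands and boundary arcs it has to traverse; each such elementary slide past one obstruction is a 2F move. So the combinatorial content is: (i) identify the disk $D$ in which the original type-2 move takes place; (ii) describe the moving strand inside $D$ as a finger that is pushed from an initial small neighbourhood of a point across to its final position; (iii) list the strands and the (at most one) boundary arc that this finger meets along the way, in order; (iv) observe that the first of these crossings, together with the creation of the bigon, is exactly a 2E move, while every subsequent one is a 2F move; (v) check that after the whole sequence the resulting tangle is again fine.

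\textbf{Carrying it out.} First I would set up notation: a type-2 move in the crossing-creating direction takes place in a disk $D$ whose intersection with $T$ is two disjoint arcs $\alpha,\beta$, with $\beta$ to be pushed across $\alpha$. Shrinking $D$ slightly and using a move of type~0, I may assume $\beta$ starts as a straight chord near one side of $D$ and $\alpha$ is a straight chord near the opposite side, with all of $T$ outside $D$ fixed. Now enlarge the region of attention: let $f$ be the face of the fine tangle through which $\beta$ will be pushed. By the definition of fine, $f$ is an open disk, or an annulus meeting $\partial S$ in a single arc lying in one boundary circle. In either case $f$ is a disk (cutting the annulus along an arc from its $\partial S$-edge), so the path that the finger traces inside $f$ can be isotoped, rel its endpoints on $\partial f$, to a standard embedded arc $\gamma$. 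The finger-push along $\gamma$ meets, in a definite cyclic-free order, the arcs of $\partial f$ that $\gamma$ must cross to reach $\alpha$. Each such arc is either an edge of $T$ (a strand) or a subarc of $\partial S$; by the fineness hypothesis (the $\partial S$-part of $\partial f$ is connected, and is a whole boundary circle when $f$ is an annulus), the finger never has to cross $\partial S$ — that is where the ``barrier'' in Figure~\ref{fig:factoringmoves2} comes from — so every obstruction is a strand of $T$.

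\textbf{The factorization and the main obstacle.} The first elementary step pushes the tip of the finger from its starting disk across the first strand it meets: this creates two new crossings with a bigon between them — precisely the move $2E^+$ or $2E^-$ depending on the local crossing data (the two forms in Figure~\ref{fig:eigthmoves}). Each subsequent step slides the already-formed finger across one more strand; locally this is exactly the picture of Figure~\ref{fig:fingerrotation}, i.e.\ a move $2F_A$ or $2F_B$, the subscript recording on which side of the finger the crossed strand passes. The last step, where the tip reaches $\alpha$, completes the finger so that the two new crossings lie exactly where the original type-2 move created them; so the composite has the same effect on $T$ as the given move. Finally I would verify fineness is preserved: a 2E move replaces one disk face $f$ by $f$ with a small bigon bitten out of it plus the bigon itself — both still disks with admissible $\partial S$-intersection — and similarly a 2F move only rearranges how the finger sits inside its faces, splitting a disk into a disk and a disk (or an annulus appropriately), never disconnecting a $\partial S$-intersection, because the finger stays away from $\partial S$. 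The main obstacle I anticipate is the bookkeeping in step~(v): one must be careful that when $f$ is an \emph{annular} face, the finger may run parallel to the $\partial S$-edge and the intermediate faces produced are still of the allowed two types, and in particular that no intermediate face acquires a disconnected intersection with $\partial S$ or an extra handle. Making that case-check clean — ideally by always cutting the annular face to a disk at the outset, so that the whole argument runs uniformly in the disk case and one only reglues at the end — is where the real work lies; everything else is an isotopy-of-the-disk argument that the figures already make transparent.
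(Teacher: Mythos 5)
Your top-level strategy --- a single 2E move followed by 2F moves, with fineness (connectedness of $\partial f\cap\partial S$) guaranteeing that the boundary ``barrier'' can be avoided, plus a check that all intermediate tangles are fine --- is indeed the paper's strategy, and you correctly identify why fineness matters. But the combinatorial engine you propose for producing the sequence of elementary moves is mis-specified, and taken literally it fails. You read the elementary steps off from ``the arcs of $\partial f$ that $\gamma$ must cross to reach $\alpha$'', where $\gamma$ is the path the finger traces inside the face $f$. A type 2 move, however, takes place in a disk meeting $T$ in exactly the two arcs $\alpha,\beta$, and the channel between them lies in the single face $f$; so $\gamma$ lies inside $f$ and crosses nothing before $\alpha$. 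Your list of obstructions is therefore empty, the ``first strand it meets'' is $\alpha$ itself, and your factorization collapses to the claim that every such type 2 move \emph{is} a single 2E move. That is false: a 2E move is the specific small 8-configuration in which the finger passes under the edge at its own base, not the passage of a long finger under a distant edge of $\partial f$. Relatedly, describing a 2F step as ``sliding the already-formed finger across one more strand'' (with that strand ending up on one side of the finger) would make crossings accumulate, two per strand passed, whereas the move being factored creates exactly two crossings in total; a 2F move leaves the crossing count unchanged, transferring the finger's tip from under one edge of $\partial f$ to under an adjacent one.

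The correct bookkeeping, which is what the ring of intermediate pictures in Figure~\ref{fig:factoringmoves2} encodes, is indexed not by transversal crossings of $\gamma$ but by a walk along $\partial f$: a 2E move first creates the small curled finger at the base point on $\beta$, its tip under the edge there; each subsequent 2F move rotates the tip so that it passes under the next edge of $\partial f$; one walks around the face boundary until the tip sits under $\alpha$ at the prescribed spot. Fineness enters exactly as you said: the base and target lie on strands of $T$, and since $\partial f\cap\partial S$ is a single arc (or, for an annular face, the entire other boundary circle), one of the two directions around $\partial f$ never meets $\partial S$, so the walk can be completed --- in the barrier and annular cases in only one direction, as the paper notes. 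With the mechanism corrected, your fineness checks for the individual 2E and 2F steps (and hence your derivation of the first assertion from the factorization) go through; note that the paper also verifies the first assertion directly, by observing that the move cuts $f$ along an arc disjoint from $\partial S$, so the new faces are disks and the $\partial S$ conditions persist, independently of the factorization.
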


\begin{proof}
Such a move can be regarded as taking place
essentially within a face $f$, via a finger-creation move where one edge of
the face grows within $f$ into a thin finger which then
passes under an edge (possibly the one where it originated).
The faces adjacent with $f$ need not be different from $f$, as some
pairs of edges around the boundary of $f$ could be identified, but any diagram
will at least show faithfully the arrangement of edges around each vertex.
All new faces created are topological disks, and the conditions
referring to boundaries that intersect $\partial S$ clearly continue to hold.
In the typical case shown above, by moving around the outer illustrations,
starting with the copy of the central one, any other can be reached.
This is done by making a move of type 2E, then moves of type 2F, always within
a topological disk.
In cases such as the one shown, where a barrier occurs, only one
direction to proceed is possible.  The only other case of interest is when the
region where the move is made (creating crossings) is annular.  By definition, this
region must have one boundary component in $\partial S$ and the other disjoint
from $\partial S$, and again there is only one direction in which to proceed.
\end{proof}

Reidemeister moves of type 2, in the direction that creates a new
pair of crossings, can be used to convert tangles, or their diagrams, into equivalent
ones that are fine.  It is this process that we call preparation.
Sequences of such moves give what will be called finger-creation moves, where
a a small segment of $T$ is replaced by a thin finger (almost parallel pair
of strands) that expands within $S$, avoiding $\partial S$, and passes under
$T$, other fingers, or even earlier parts of the current finger.

\begin{proposition}
Let $T$ be a tangle in a surface $S$ such that every connected component of $S$ contains
points of $T$.  Then $T$ can be prepared by applying finger-creation moves repeatedly
so that the new tangle is fine.
\end{proposition}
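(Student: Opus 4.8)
The plan is to turn the proposition into a statement about cutting faces along arcs, and then to realize each cut by a single finger‑creation move.

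First I would reduce to the case that $S$ is connected. The hypothesis says every component of $S$ meets $T$, and a finger‑creation move never carries material from one component to another, so it is enough to prepare each component separately; fix such an $S$ with $T\neq\emptyset$. The one fact I really need about the hypothesis is that, in this situation, the boundary of \emph{every} face $f$ (relative to $T\cup\partial S$) contains at least one arc of $T$: if it did not, then $\partial f\subseteq\partial S$, which would force $\bar f$ to be a union of components of $S$, hence all of $S$, contradicting $T\neq\emptyset$. This is exactly what lets a finger be anchored on the boundary of any face.

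Next I would introduce a complexity. Call a face \emph{good} if it is an open disk, or an open annulus one of whose two boundary circles is a whole boundary component of $S$ while the other lies in $T$; these are precisely the faces allowed in a fine tangle. To a face $f$ with $b(f)$ boundary circles and ``genus weight'' $h(f)$ (twice the genus in the orientable case, the number of crosscaps otherwise, so that $\chi(f)=2-h(f)-b(f)$) assign $w(f)=h(f)+b(f)-1$, except that $w(f)=0$ when $f$ is a good annulus. Then $w(f)\ge 0$ with equality exactly on good faces, and $c(T):=\sum_f w(f)$ is a non‑negative integer that is $0$ iff $T$ is fine. The mechanism I would rely on is the following, read off directly from the local pictures (the situation of Fig.~\ref{fig:factoringmoves2} is the model case): given an embedded arc $\gamma$ whose interior lies in an open face $f$ and whose endpoints lie on $T$‑arcs of $\partial f$, one finger‑creation move — grow a thin finger at one endpoint, run it parallel to $\gamma$ inside $f$, and let its tip pass under the edge containing the other endpoint — yields a tangle $T'$ whose faces are: the one or two faces obtained by cutting $f$ along $\gamma$; one new topological disk bounded by the tip of the finger; and all other faces unchanged up to homeomorphism and with the same intersection with $\partial S$ (hence the same $w$), since the thin finger avoids $\partial S$ and merely grafts a ``tongue'' onto the face on the far side of each of the two edges it meets. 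Thus $c(T')=c(T)-w(f)+w(\text{pieces of }f\text{ cut along }\gamma)$, so such a move strictly decreases $c$ precisely when $\gamma$ is \emph{$w$‑reducing}. (One degenerate configuration must be checked separately, when the anchoring edge, or the edge under the tip, has $f$ on both sides; the conclusion is the same but the bookkeeping of which face receives the tongue has to be redone.)

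The remaining step — and the one I expect to be the main obstacle — is that every face $f$ with $w(f)>0$ carries a $w$‑reducing arc with endpoints on $T$; the difficulty is the constraint that fingers cannot touch $\partial S$, so an essential arc of $f$ may be forced to end on a boundary circle of $f$ lying entirely in $\partial S$, where no finger can be anchored. I would argue by cases on the topology of $f$. If $h(f)>0$, take a non‑separating two‑sided arc (or a one‑sided arc in the non‑orientable case), slide its endpoints onto $T$‑arcs of $\partial f$, and cut: this lowers $h(f)$. If $h(f)=0$ and at least two boundary circles of $f$ carry $T$‑arcs, an arc joining two of them is non‑separating and cutting merges them, lowering $b(f)$. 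The one delicate case is $h(f)=0$ with exactly one boundary circle $C_1$ carrying $T$‑arcs, so $b(f)\ge 2$ and the other circles are whole boundary components of $S$ (and if $b(f)=2$ then, since $f$ is not good, $C_1$ also has arcs in $\partial S$): here every arc with endpoints on $T$ is separating, so no single cut can lower $h+b$, but one can choose an essential arc with both endpoints inside a single $T$‑arc of $C_1$, winding around exactly one of the $\partial S$‑circles $C_2$; the piece containing $C_2$ is then an annulus bounded by $C_2$ and an all‑$T$ circle, i.e. a good annulus with $w=0$, while the complementary piece has one fewer boundary circle (and is already a disk when $b(f)=2$), so the total $w$ drops by at least one. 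Since $c(T)$ is a non‑negative integer strictly decreased by each finger‑creation move built in this way, after finitely many moves $c(T)=0$ and the resulting tangle is fine.
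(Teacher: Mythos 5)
Your inductive mechanism (anchor a finger on a $T$-arc of the face boundary, realize a cut along an arc $\gamma$, and track a complexity that each such move decreases) is sound as far as it goes, and it is a more careful version of what the paper only sketches. But there is a genuine gap: your notion of ``good'' face is not the same as the faces allowed in a fine tangle. The definition of fine requires, for \emph{every} face (disks included), that $\partial f$ intersect $\partial S$ in a connected set; only for annuli is the extra condition imposed that this set be a whole boundary component of $S$. Your complexity $c(T)=\sum_f w(f)$ vanishes as soon as all faces are disks or good annuli, with no control on how many arcs of $\partial S$ appear on the boundary of a disk face, so your induction terminates too early. Concretely, take $S$ an annulus and $T$ a single arc joining its two boundary circles: the unique face is a disk whose boundary meets $\partial S$ in two disjoint arcs, so $c(T)=0$ while $T$ is not fine (nor even well-placed). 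The same defect appears at the end of your ``delicate case'' with $b(f)=2$: after isolating $C_2$ you are left with a disk that may still carry several $\partial S$-arcs, and your argument declares victory there.

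The gap is reparable within your framework, and repairing it recovers exactly the step the paper's proof states explicitly (``cut faces until their boundary \dots intersects $\partial S$ in at most one connected piece''). Augment the complexity, say lexicographically, by $\sum_f\bigl(\text{number of components of }\partial f\cap\partial S-1\bigr)^{+}$, and add one more case: if a face has $w(f)=0$ but its boundary meets $\partial S$ in two or more arcs, choose $\gamma$ with endpoints on $T$-arcs separating two of these $\partial S$-arcs; the cut is separating, so $\sum_f w(f)$ does not increase, while the new count strictly drops, and your observation that tongues and bites leave the $\partial S$-intersection of the other faces unchanged keeps the bookkeeping valid. With that addition your argument is complete and, unlike the paper's very brief proof, actually supplies the induction and the case analysis; without it, what you prove is only that all faces can be made disks or good annuli, which is strictly weaker than fineness.
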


\begin{proof}
Given $T$ as above, the procedure is straightforward.
One can cut faces until their boundary is connected and
intersects $\partial S$ in at most one connected piece.
A way to remove handles is shown in the figure below. 
Any unorientable faces at this stage are
M\"obius strips, and can easily be cut into disks.
When a face has a boundary component fully contained in $\partial S$,
finger-creation moves cannot approach that component, and the best that
can be done is to isolate the component, creating an annular face.
\end{proof}

\begin{figure}[!ht]
\begin{center}
\includegraphics[width=14cm]{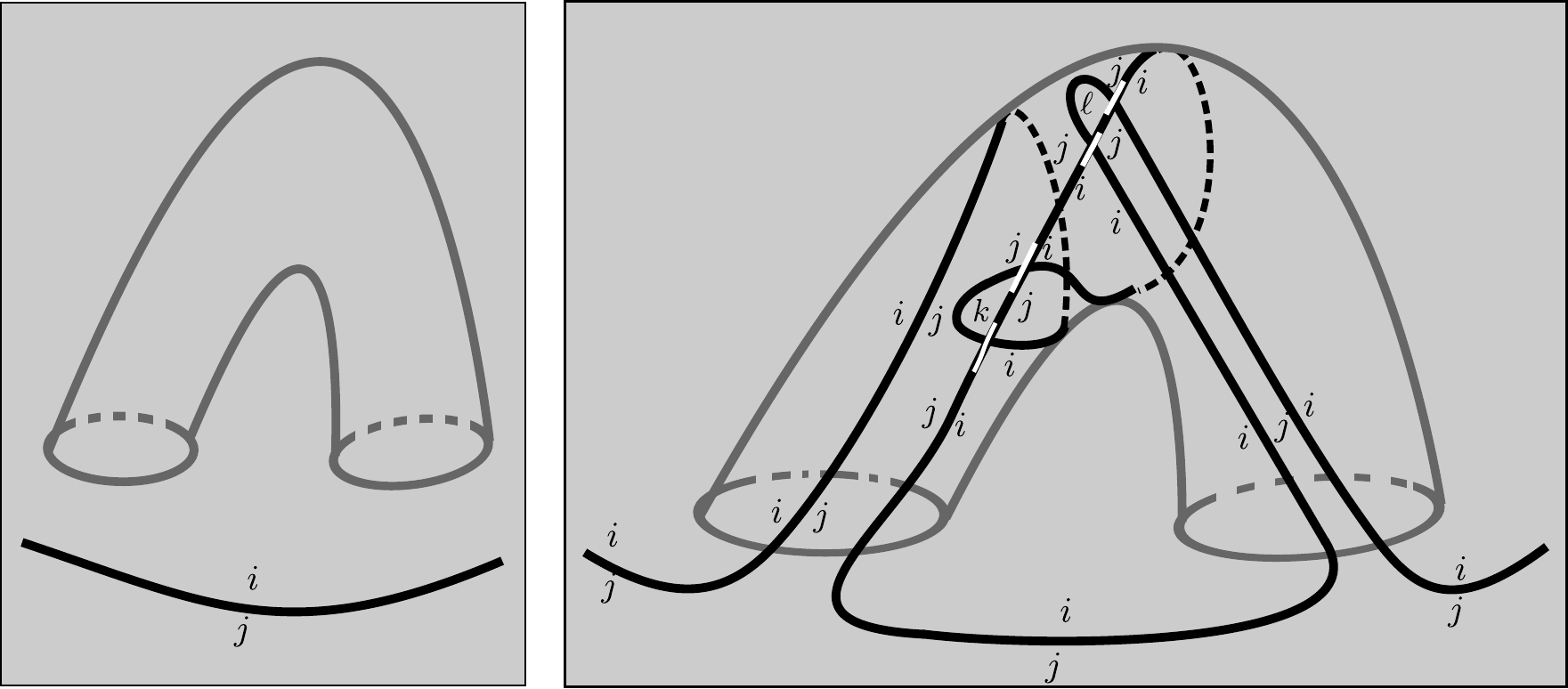} \\
\caption{\sf {\bf Cutting a handle by finger-creation moves}
}
\label{fig:fingerrefinement}
\end{center}
\end{figure}

\begin{theorem}
All prepared tangles obtained from the same tangle are strongly equivalent.
\end{theorem}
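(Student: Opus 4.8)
The plan is to transfer the whole question into the world of fine tangles, where, by the previous theorem, the only kind of refinement available to us --- a finger-creation move --- is automatically a strong equivalence, and then to produce a common refinement of any two prepared tangles. The first thing I would record is the following consequence of the previous theorem: if a fine tangle $T''$ is obtained from a fine tangle $T'$ by a finger-creation move, then $T'$ and $T''$ are strongly equivalent, and $T''$ is again fine. This is immediate, since a finger-creation move is a finite sequence of moves of type 2 in the crossing-creating direction (each realized, as in the proof of the previous theorem, by a finger-tip growing inside a face and then passing an edge), and the previous theorem applies to each step; hence fineness is preserved throughout and each step factors into moves of types 0, 2E and 2F. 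Iterating, any sequence of finger-creation moves performed on a fine tangle yields a strongly equivalent fine tangle.

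Next, let $T_1$ be prepared from $T$ by a sequence $\sigma_1$ of finger-creation moves, and $T_2$ by a sequence $\sigma_2$. Viewing the fingers produced in $\sigma_1$ and $\sigma_2$ as thin bands drawn along fixed tracks in $S$, with these tracks avoiding $\partial S$, I would grow on $T_1$, one at a time, thin parallel copies of all the $\sigma_2$-fingers along their tracks, letting each copy pass under everything it meets, in particular under the $\sigma_1$-fingers. This is a legitimate string of finger-creation moves, so it produces a fine tangle $X$ with $T_1$ strongly equivalent to $X$, by the paragraph above. Symmetrically, growing copies of the $\sigma_1$-fingers on $T_2$ --- each passing under everything, and otherwise following the same tracks and the same relative crossing pattern --- gives a fine tangle $X'$ with $T_2$ strongly equivalent to $X'$. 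Now $X$ and $X'$ both consist of $T$ carrying the same two systems of thin, generically placed fingers, and, comparing the construction orders, the only place where they can differ is at a crossing where a $\sigma_1$-finger meets a $\sigma_2$-finger: in $X$ the $\sigma_1$-finger passes over, while in $X'$ it passes under.

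The remaining task --- and the step I expect to be the real obstacle --- is to prove that $X$ is strongly equivalent to $X'$; that is, that these opposite over/under conventions at the $\sigma_1$--$\sigma_2$ crossings can be reconciled using only moves of types 0, 2E, 2F and 3. The approach I would take is to correct the crossings one at a time. Since a finger is a folded band with a free tip, a disagreement at a crossing of two fingers $F$ and $G$ can be slid along one of them, say $F$, toward the tip of $F$ --- past intervening crossings by moves of type 3, and otherwise by isotopy --- until it lies right at the end of $F$, where $G$ merely abuts the U-turn of $F$; there the over/under can be reversed by retracting the tip of $F$ back past the location of $G$ and then re-advancing it past $G$ on the under side, each of these two operations being, up to reversal, a single finger-pass and hence, as in the analysis behind Figure~\ref{fig:factoringmoves2}, a short sequence of 2E and 2F moves of the kinds in Figures~\ref{fig:eigthmoves} and~\ref{fig:fingerrotation}. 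Sliding the crossing back and repeating for every disagreement should turn $X$ into $X'$.

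Along the way I would check, as in the proof of the previous theorem, that fineness is never lost and that each 2E or 2F move takes place inside a topological disk. The points that need care --- where the genuine work lies --- are that $S$ need not be a disk, that the fingers are bands rather than single strands and may cross one another or themselves several times, and that the tip of $F$ may sit in a face whose boundary has several arcs; I would handle these by an innermost-crossing induction together with the observation that sliding a finger-tip past a strand never creates a crossing with $\partial S$ or with $T$. Granting this, $T_1$ is strongly equivalent to $X$, to $X'$, and to $T_2$, which proves the theorem; the weaker assertion for well-placed tangles then follows by rerunning the same argument with the coarser refinement described before the statement, since that refinement too is assembled from finger-creation moves and nothing above uses fineness beyond the previous theorem.
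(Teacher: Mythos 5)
Your first two paragraphs reproduce the paper's setup (a common refinement built by growing each preparation's fingers on the other prepared tangle, each growth step being a crossing-creating type 2 move covered by the preceding theorem), but you then diverge at exactly the point where the argument is decided, and the divergence leaves a genuine gap. By insisting that the newly grown fingers always pass \emph{under} everything in both constructions, you manufacture two tangles $X$ and $X'$ that disagree at every crossing of a $\sigma_1$-finger with a $\sigma_2$-finger, and the entire burden of the proof is shifted onto reconciling these disagreements. That step is only sketched: sliding a band--band disagreement along $F$ to its tip is not achieved by type 3 moves and isotopy alone (it creates, and must later destroy, new crossings between $G$ and every strand met along the way, so it needs crossing-creating and crossing-removing type 2 moves whose factorization into 2E/2F and whose preservation of fineness have to be checked at each stage); a crossing change is not in general a regular isotopy, so the retract-and-re-advance trick depends delicately on the tip still being free after the slides have entangled it; and the ``innermost-crossing induction'' that is supposed to organize all this is never formulated. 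You yourself flag this as ``the real obstacle,'' and as written it is an unproved claim, not a proof.

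The difficulty is self-inflicted, and the paper's proof shows how to avoid it entirely: form a single tangle $T_3$ containing both systems of fingers, with the \emph{asymmetric} convention that the $T_2$-fingers pass under the $T_1$-fingers (and both pass under the original $T$). Then converting $T_1$ to $T_3$ grows the $T_2$-fingers under everything, and converting $T_2$ to $T_3$ grows the $T_1$-fingers under $T$ but \emph{over} the $T_2$-fingers; both are still sequences of crossing-creating type 2 moves applied to fine tangles, since the first theorem places no restriction on which strand goes over (the two forms $2E^+$, $2E^-$ exist precisely for this). With that choice your $X$ and $X'$ coincide, no crossing ever needs to be changed, and the proof ends where your second paragraph ends. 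If you prefer to keep your symmetric convention, you must actually carry out the reconciliation argument in full, including the fineness bookkeeping for every intermediate move; at present that is missing.
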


\begin{proof}
Consider two preparations $T_1$ and $T_2$ of the same tangle $T$.
It can be assumed that the small strands of $T$ that were initially expanded
are mutually disjoint.
Form a new tangle $T_3$ using both preparations, where any new intersections
between $T_1$ and $T_2$ are adjusted to be transverse, with the part belonging to
$T_2$ passing under that of $T_1$.
One can then convert both $T_1$ and $T_2$ to $T_3$ via a series of moves of types
2E and 2F between fine tangles, so $T_1$ and $T_2$ are strongly equivalent.
\end{proof}

This will now be strengthened.  We treat only regular isotopy, defined via moves of
types 0, 2, 3.   There are analogous versions which are immediate consequences.

\begin{theorem}
All prepared tangles from regularly isotopic tangles are strongly equivalent.
\end{theorem}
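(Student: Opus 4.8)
The plan is to reduce to a single Reidemeister move and then to exploit the freedom, granted by the preceding theorem, in the choice of a preparation. Since regular isotopy is the equivalence relation generated by moves of types $0$, $2$ and $3$, it suffices to treat two tangles $T$ and $T'$ differing by one such move $R$ and to exhibit \emph{one} preparation of $T$ that is strongly equivalent to \emph{one} preparation of $T'$; the preceding theorem then promotes ``one'' to ``all'' at both ends, and concatenation along the chain realizing a general regular isotopy completes the argument. If $R$ is of type $0$, say a homeomorphism $h$ of $S$ isotopic to the identity, then $h$ carries every preparation of $T$ to a preparation of $h(T)=T'$, the two being related by the single type $0$ move $h$, so this case needs nothing further.

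Assume then that $R$ is of type $2$ or $3$. It acts within some open disk $D\subset S$, which we may choose so small that $\overline D$ is disjoint from $\partial S$ and meets $T$ only in the finitely many arcs that take part in $R$. The crucial step is to select a preparation of the relevant tangle whose finger-creation moves are all supported in $S\setminus\overline D$. This is legitimate: by the preparation proposition a preparation exists, and by the preceding theorem it does not matter which one we use, so we may cut faces into the simple pieces allowed in a fine tangle while routing the cutting fingers away from $D$; indeed, deleting the small disk $\overline D$ from any face changes neither its topological type nor its intersection with $\partial S$, so a face can always be subdivided by fingers lying in its complement with $\overline D$ (handle removal, cutting of M\"obius strips, and isolation of a boundary component contained in $\partial S$ all take place far from $D$, which is disjoint from $\partial S$).

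Now orient $R$ in the direction that creates a pair of crossings when that applies, writing it as a move $A\to B$ with $\{A,B\}=\{T,T'\}$ (for type $3$ either direction serves, and we take $A=T$). Prepare $A$ avoiding $D$ as above, obtaining a fine tangle $\widehat{A}$; since $\widehat{A}$ agrees with $A$ on $D$, the move applies verbatim to $\widehat{A}$ inside $D$, producing a tangle $\widehat{B}$. If $R$ is of type $2$, the theorem above on type $2$ moves applied to fine tangles shows that $\widehat{B}$ is again fine and that $\widehat{A}\to\widehat{B}$ factors into a move of type $2E$ followed by moves of type $2F$; if $R$ is of type $3$, then $\widehat{A}\to\widehat{B}$ is itself a type $3$ move, and $\widehat{B}$ is fine because that move merely reshapes the finitely many faces meeting $D$, each remaining a disk or an annulus with its intersection with $\partial S$ unchanged. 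In either case $\widehat{A}$ and $\widehat{B}$ are strongly equivalent. Moreover $\widehat{B}$ is a preparation of $B$: the finger-creation moves producing $\widehat{A}$ from $A$ are supported off $D$, and $A$ and $B$ coincide outside $D$, so those same moves applied to $B$ are again valid finger-creation moves and yield exactly $\widehat{B}$. Since $\widehat{A}$ and $\widehat{B}$ are preparations of $A$ and $B$ and $\{A,B\}=\{T,T'\}$, the preceding theorem now gives that every preparation of $T$ is strongly equivalent to whichever of $\widehat{A},\widehat{B}$ prepares $T$, every preparation of $T'$ to the other, and $\widehat{A}\sim\widehat{B}$; hence any preparation of $T$ is strongly equivalent to any preparation of $T'$.

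The only delicate point is the crucial step above, namely arranging that the support disk of $R$ is left untouched by the preparation; everything else is an appeal to the two earlier results or routine verification. I expect the check that a face can always be cut while avoiding a prescribed small disk to be the part requiring the most care, though, as with the preparation proposition, it is essentially straightforward.
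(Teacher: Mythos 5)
Your argument is correct and reaches the paper's conclusion, but the mechanism at the key step is genuinely different. Both proofs work move-by-move along a chain $T=T_0,\dots,T_n=U$ realizing the regular isotopy, but the paper carries a \emph{single} preparation through the whole chain: each $T'_{i+1}$ is obtained from $T'_i$ by first pushing any obstructing new strands out of the way (they always pass under the original ones) and then performing moves corresponding to the move between $T_i$ and $T_{i+1}$, all with moves of types 0, 2E, 2F and 3; the preceding theorem (all preparations of one tangle are strongly equivalent) is invoked only once, at the very end, to compare $T'_n$ with the given preparation $U'$ of $U$. You instead re-prepare at every step: you choose a preparation whose fingers avoid a small disk supporting the move, apply the move verbatim to the resulting fine tangle (using the factorization theorem for crossing-creating type 2 moves, and a direct check that type 3 moves preserve fineness), observe that the result is a preparation of the other tangle, and appeal to the preceding theorem at both ends of each step. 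What your route buys: the paper's somewhat hand-waved ``move the new strands out of the way'' step is replaced by a cleaner localization statement, the direction issue for type 2 moves is handled explicitly by orienting toward crossing creation, and preservation of fineness under type 3 is verified rather than asserted. What the paper's route buys: it needs no claim that a preparation can be chosen avoiding a prescribed disk, which is the one new ingredient you (rightly) flag; that claim is true and no less rigorous than the paper's own preparation proposition, since every face that needs cutting has boundary arcs outside the small disk, so fingers can originate, travel, and dive under strands entirely away from it, while faces wholly inside the disk (such as the central triangle of a type 3 configuration) are already disks disjoint from $\partial S$ and need no cutting.
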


\begin{proof}
Suppose $T'$ (resp.\ $U'$) is an arbitrary tangle obtained by preparing $T$ (resp. $U$)
in $S$, and let $T=T_0, \dots, T_n=U$ be a sequence of tangles in which
adjacent ones are related by Reidemeister moves of types 0, 2, and 3.
We first show how to obtain a sequence $T'=T'_0, \dots, T'_n$ of fine tangles,
where each $T'_i$ is prepared from $T_i$ by using new strands
that always pass under the ones of $T_i$, and
consecutive tangles $T'_i$, $T'_{i+1}$ are related by a series of restricted
Reidemeister moves.  After the given initial $T'$, each $T'_{i+1}$ in the sequence
is obtained from $T'_i$ by carrying out moves related to the one between $T_i$ and
$T_{i+1}$, after moving out of the way any new strands
that obstruct the move (passing under the others).
All this can clearly be done using only moves of types 0, 2E, 2F, and 3.
Finally, note that $T'_n$ and $U'$ are preparations of the same tangle $U$.
Thus $T'$ and $U'$ are strongly equivalent.
\end{proof}

As an obvious reformulation of the above ideas, we have:

\begin{corollary}
Each regular isotopy class $[T]$ of tangles in $S$ determines, via preparations,
a unique strong equivalence class $\langle T'\rangle$ of tangles in $S$,
whose members are the fine tangles contained in $[T]$.
\end{corollary}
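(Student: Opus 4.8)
The plan is to show that the corollary follows almost immediately from the preceding theorems, which together establish that ``being obtained by a preparation'' induces a well-defined, surjective-onto-its-image correspondence from regular isotopy classes to strong equivalence classes. First I would observe that the Proposition guarantees that every tangle $T$ (under the stated mild hypothesis on $S$, which I would either carry as a standing assumption or absorb by restricting to surfaces meeting the tangle) admits at least one preparation, so the assignment $[T] \mapsto \langle T' \rangle$ is at least defined on some chosen representative. Then I would invoke the last Theorem: if $T$ and $U$ are regularly isotopic, then any preparation $T'$ of $T$ and any preparation $U'$ of $U$ are strongly equivalent, so $\langle T' \rangle = \langle U' \rangle$ does not depend on the choice of representative in $[T]$ nor on the choice of preparation. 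This gives a well-defined map $\Phi \colon [T] \mapsto \langle T' \rangle$.

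Next I would check the two claims packaged into the word ``unique'' and the description of the members. Well-definedness is exactly the previous paragraph. For the claim that the members of $\langle T'\rangle$ are precisely the fine tangles contained in $[T]$: one inclusion is that every preparation of a tangle in $[T]$ is, by construction, a fine tangle regularly isotopic (via the moves of types 0, 2, 3 used in preparation, which are special Reidemeister moves) to the original, hence lies in $[T]$; and by the last Theorem all such preparations are strongly equivalent to $T'$, so they all lie in $\langle T'\rangle$. Conversely, any fine tangle $F \in [T]$ is in particular a preparation of itself (the empty sequence of finger-creation moves, since $F$ is already fine), so it is a preparation of a tangle regularly isotopic to $T$, whence $F \in \langle T'\rangle$ by the same Theorem. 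Thus $\langle T'\rangle \cap [\,\cdot\,]$-membership is an equivalence, and the set of members is exactly $[T] \cap \{\text{fine tangles}\}$.

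The main point to be careful about is the direction where a fine tangle is treated as its own preparation: one must confirm that ``preparation'' as used in the Theorem statements includes the trivial case of zero finger-creation moves, so that an already-fine tangle counts as a preparation of itself. Given the definition of preparation in the text (``repeatedly applying finger-creation moves \dots\ so that the new tangle is fine''), the empty sequence of such moves is permitted precisely when the starting tangle is already fine, so this is legitimate and I would state it explicitly. A secondary bookkeeping point is that strong equivalence classes genuinely partition the fine tangles, so ``a unique strong equivalence class'' is meaningful; this is just the observation that strong equivalence is an equivalence relation, which is immediate from its definition via sequences of moves. With these remarks in place, no real obstacle remains: the corollary is a direct repackaging, and I would present it in two or three sentences citing the last Theorem and the Proposition.
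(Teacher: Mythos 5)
Your proposal is correct and matches the paper's intent: the paper offers no separate argument, presenting the corollary as ``an obvious reformulation'' of the Proposition (existence of preparations) and the preceding Theorem (all preparations of regularly isotopic tangles are strongly equivalent), which is exactly how you derive it. Your extra care about the empty preparation (an already fine tangle in $[T]$ is a preparation of itself) and the remark that the restricted moves are special Reidemeister moves, so members of $\langle T'\rangle$ indeed lie in $[T]$, simply make explicit what the paper leaves implicit.
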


The class $\langle T'\rangle$ can be regarded as a graphical invariant of the original
regular isotopy class $[T]$.  This provides a more promising starting point for methods
that construct various families of algebraically-defined invariants,
as only certain kinds of Reidemeister moves need be respected.
In a companion article \cite{joli2012B}, the first in a planned series,
we introduce invariants that use the new idea.  In our setup, the requirement
for using fine tangles is far more stringent than necessary, as it is possible
to work directly with tangles satisfying the following mild restriction.

\begin{definition}
A tangle $T$ in $S$ is {\em well-placed} if, for each of its faces $f$, the
boundary $\partial f$ intersects $\partial S$ in a connected set.
\end{definition}

The notion of equivalence between such tangles is again given by
sequences of Reidemeister moves, now involving only tangles that are well-placed.
There is also an appropriate version of the previous corollary.

\begin{corollary}
Each regular isotopy class $[T]$ of tangles in $S$ determines, via preparations,
a unique equivalence class $\{ T'\}$ of well-placed tangles in $S$,
whose members are the well-placed tangles contained in $[T]$.
\end{corollary}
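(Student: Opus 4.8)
The plan is to mimic the proof of the previous corollary almost verbatim, substituting the word ``well-placed'' for ``fine'' throughout and checking that the three facts used there still hold in the coarser setting. Concretely, I would first observe that the analogue of the preparation proposition holds: given any tangle $T$ in $S$ with every component of $S$ meeting $T$, one can apply finger-creation moves to cut faces until each face $f$ has $\partial f \cap \partial S$ connected, since the obstructions encountered in the original proof (disconnected boundary pieces in $\partial S$, handles, M\"obius faces) are all dealt with by finger-creation moves in exactly the same way, and well-placedness is strictly weaker than fineness so no further cutting is needed. Thus every regular isotopy class $[T]$ contains at least one well-placed tangle, and by construction every well-placed tangle in $[T]$ arises as a preparation.

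Next I would establish the two ``confluence'' statements. For tangles $T_1, T_2$ that are both preparations of the same tangle $T$: form $T_3$ by overlaying both collections of new strands with $T_2$'s part passing under $T_1$'s, exactly as in the proof of ``All prepared tangles obtained from the same tangle are strongly equivalent''; the overlay $T_3$ is again well-placed (overlaying extra strands only subdivides faces further, and subdividing a face whose intersection with $\partial S$ is connected yields faces with the same property), and one converts $T_1$ and $T_2$ to $T_3$ through intermediate well-placed tangles using ordinary type-2 and type-3 moves, so $T_1 \sim T_2$ in the well-placed sense. Then, for regularly isotopic $T$ and $U$ with arbitrary preparations $T'$ and $U'$: take a Reidemeister sequence $T = T_0, \dots, T_n = U$ and push the preparation $T'$ along it, at each step $T_i \to T_{i+1}$ first moving obstructing new strands out of the way (under the others) and then carrying out the move, obtaining a sequence of well-placed tangles $T' = T'_0, \dots, T'_n$ with $T'_i \to T'_{i+1}$ realized by well-placed-admissible moves; since $T'_n$ and $U'$ are both preparations of $U$, the first part finishes the argument.

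Assembling these, the corollary follows formally: the map sending $[T]$ to the set of well-placed tangles it contains is well-defined (the set is nonempty by preparation) and that set is a single equivalence class (any two of its members are equivalent through well-placed tangles, by the confluence statements and the fact that any well-placed member of $[T]$ is a preparation of some representative). Conversely the equivalence class is contained in $[T]$ since well-placed-admissible moves are in particular Reidemeister moves of types $0$, $2$, $3$.

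I expect the main obstacle to be purely expository rather than mathematical: one must be careful that the ``restricted'' moves available between well-placed tangles are genuinely unrestricted type-2 moves (unlike the fine case, where only 2E and 2F moves are permitted), so the statements imported from the earlier theorems have to be re-read with that substitution in mind, and in particular the step where new strands are ``moved out of the way'' must be checked to keep every intermediate tangle well-placed. This is routine — subdividing or locally sliding strands in a face whose trace on $\partial S$ is a single connected arc never disconnects that trace — but it is the one place where a careless transcription of the fine-case proof could go wrong, so I would spell it out explicitly rather than merely cite the earlier corollary.
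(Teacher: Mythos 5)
Your argument is correct in substance, but it takes a genuinely different route from the paper's. The paper proves this corollary by a short reduction to the fine-tangle machinery already established: it observes that the preparation process (type 2 finger-creation moves) applied to a \emph{well-placed} tangle passes only through well-placed tangles on its way to a fine one, and then invokes the preceding theorem (or its corollary) --- any two fine refinements of tangles in $[T]$ are strongly equivalent, and since fine tangles are in particular well-placed and moves of types $0$, $2E$, $2F$, $3$ are special Reidemeister moves, the resulting chain connects the two given well-placed tangles entirely through well-placed intermediates. You instead re-run the whole development at the well-placed level: re-proving the preparation proposition, the overlay (confluence) argument for two preparations of the same tangle, and the argument pushing a preparation along a Reidemeister sequence, with ``well-placed'' substituted for ``fine'' and unrestricted type 2 moves in place of $2E$/$2F$. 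This works, at the same level of rigor as the paper's own sketches, and it has the minor conceptual advantage of showing that the well-placed statement does not really depend on the $2E$/$2F$ factorization theory; but it is considerably longer and forces you to verify well-placedness of every intermediate tangle in three separate constructions (your own flagged point about ``moving strands out of the way'' being the most delicate), whereas the paper's two-line proof delegates all of that to the already-proved fine case and only needs the easy observation that refining a well-placed tangle never destroys well-placedness. If you want the shorter argument, note simply that the two well-placed tangles can each be further prepared to fine tangles through well-placed intermediates, and then apply the previous theorem.
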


\begin{proof}
The preparation method for obtaining fine tangles via moves of type 2,
when applied to a well-placed tangle, produces a sequence of tangles
that are clearly well-placed.  Given two well-placed tangles in $[T]$,
the last theorem, or its first corollary, can now be applied.
\end{proof}

\bibliographystyle{plain}
\bibliography{bibtexIndex.bib}

\vspace{10mm}
\begin{center}

\begin{tabular}{l}
   Peter M. Johnson\\
   Departamento de Matem\'atica, UFPE\\
   Recife--PE \\
   Brazil\\
   peterj@dmat.ufpe.br
\end{tabular}
\hspace{7mm}
\begin{tabular}{l}
   S\'ostenes Lins\\
   Centro de Inform\'atica, UFPE \\
   Recife--PE \\
   Brazil\\
   sostenes@cin.ufpe.br
\end{tabular}
\end{center}

\end{document}